\def\qed{\hfill$\Box$\vspace{12pt}}
\long\def\delete#1{}
\newcommand{\be}{\begin{equation}}
\newcommand{\ee}{\end{equation}}
\newcommand{\bea}{\begin{eqnarray}}
\newcommand{\eea}{\end{eqnarray}}
\newcommand{\bean}{\begin{eqnarray*}}
\newcommand{\eean}{\end{eqnarray*}}
\def\deg{{\rm deg}}
\newtheorem{thm}{Theorem}[section]
\newtheorem{cor}[thm]{Corollary}
\newtheorem{lem}[thm]{Lemma}
\newtheorem{prop}[thm]{Proposition}
\newtheorem{rem}{Remark}
\numberwithin{equation}{section}
\title{Laplacian spectral characterization of some double starlike trees\thanks{Supported by the Natural Science Foundation of China (No.11361033).}}
\author{Pengli Lu$^{1}$
\,and\, Xiaogang Liu$^{2,}$\thanks{Corresponding author.   E-mail
addresses: lupengli88@163.com (\textbf{P. Lu}), xiaogliu.yzhang@gmail.com (\textbf{X. Liu}).}
\\
\footnotesize{1. School of Computer and Communication, Lanzhou
University of Technology, Lanzhou, 730050, Gansu, P.R. China}\\
\footnotesize{2. Department of Mathematics and Statistics, The University of Melbourne, Parkville, VIC 3010, Australia}}
\date{}
\begin{document}
\maketitle

\begin{abstract}
A tree is called \emph{double starlike} if it has exactly two vertices of degree greater than two. Let $H(p,n,q)$ denote the double starlike tree obtained by attaching $p$ pendant vertices to one pendant vertex of the path $P_n$ and $q$ pendant vertices to the other pendant vertex of $P_n$. In this paper, we prove that $H(p,n,q)$ is determined by its Laplacian spectrum.
\\
\\
\textbf{keywords:} Adjacency spectrum; Laplacian spectrum; $A$-cospectral graphs; $L$-cospectral graphs; Line graph
\\
{{\bf AMS Classifications:} 05C50}
\end{abstract}

\section{Introduction}

All graphs considered in this paper are simple and undirected. Let $G=(V(G),E(G))$ be a graph  with vertex set $V(G)=\{v_1,v_2,\ldots,v_n\}$ and edge set $E(G)$, where
$v_1,v_2,\ldots,v_n$ are indexed in the non-increasing order of
degrees. Let $d_i=d_i(G)=d_G(v_i)$ be the degree of the vertex $v_i$, and
$$
\deg(G) = (d_1,d_2,\ldots,d_n)
$$
the non-increasing degree sequence of $G$. The \emph{adjacency matrix} of $G$, denoted by $A(G)$, is the $n \times n$ matrix whose $(i,j)$-entry is $1$ if $v_i$ and $v_j$ are adjacent and $0$ otherwise. We call
$L(G)=D(G)-A(G)$ the \emph{Laplacian matrix} of $G$, where
$D(G)$ is the $n\times n$ diagonal matrix with $d_1,d_2,\ldots,d_n$ as diagonal entries. The
eigenvalues of $A(G)$ and $L(G)$ are called the \emph{adjacency
eigenvalues} and \emph{Laplacian eigenvalues} of $G$, respectively.
Denote by $\lambda_1(G)\geq\lambda_2(G)\geq\cdots\geq\lambda_n(G)$ and
$\mu_1(G)\geq\mu_2(G)\geq\cdots\geq\mu_n(G)$ the adjacency
eigenvalues and the Laplacian eigenvalues of $G$, respectively. The
multiset of eigenvalues of $A(G)$ (respectively, $L(G)$) is called the
\emph{adjacency} (respectively, \emph{Laplacian}) \emph{spectrum} of $G$. Two
graphs are said to be \emph{$A$-cospectral} (respectively,  \emph{$L$-cospectral}) if they have the same adjacency (respectively, Laplacian) spectrum. A graph is said to be \emph{determined by its Laplacian} (respectively, \emph{adjacency}) \emph{spectrum} if there is no other non-isomorphic graphs $L$-cospectral (respectively, $A$-cospectral) with it.

Which graphs are determined by their spectra? This is a classical question in spectral graph theory, which was raised by G\"{u}nthard and Primas \cite{kn:Gunthard56} in 1956 with motivations from chemistry. For its background, please refer to \cite{kn:vanDam03,kn:vandam07,kn:Cvetkovic2010}. It is well known that this question is still far from being completely solved, since it is often very challenging to check whether an arbitrary given graph is determined by its spectrum or not, even for some simple-looking graphs.  Up until now, many graphs have been proved to be determined by their (adjacency or/and Laplacian) spectra \cite{kn:Aalipour13,kn:Boulet08,kn:Boulet088,kn:Boulet09,kn:Bu-Zhou-Li12,kn:Cvetkovic80,kn:Cvetkovic2010,kn:vanDam03,kn:vandam07, kn:Haemers08,kn:LuLiu09,kn:LiuM10,kn:LiuDouble,kn:Wangliu10,kn:Mirzakhah10,kn:Omidi07,kn:Shen05,kn:Stanic09,kn:Wang06,kn:Wang10, kn:Zhou12}. However, only few trees with special structures have been proved to be determined by their Laplacian spectra. We collect these known trees determined by their Laplacian spectra in the following:
\begin{enumerate}
  \item Any \emph{path} is determined by its Laplacian spectrum \cite{kn:vanDam03}.
  \item The trees $Z_n$, $T_n$ and $W_n$ are determined by their Laplacian spectra, respectively  \cite{kn:Shen05}, where $Z_n$ denotes the tree obtained by attaching two pendant vertices to one pendant vertex of $P_n$ (the path with $n$ vertices); $T_n$ denotes the tree obtained by attaching one pendant vertex to the vertex with distance $2$ from one pendant vertex of $P_{n+1}$; and $W_n$ denotes the tree obtained by attaching two pendant vertices to each pendant vertex of $P_n$.
  \item Any \emph{$T$-shape tree} is determined by its Laplacian spectrum \cite{kn:Wang06}, where a tree which has exactly one vertex of degree equal to three is said to be \emph{$T$-shape}.
  \item Any \emph{starlike tree} is determined by its Laplacian spectrum \cite{kn:Omidi07}, where a tree is said to be \emph{starlike} if it has exactly one vertex of degree greater than two.
  \item Any \emph{centipede} is determined by its Lapalcian spectrum \cite{kn:Boulet088}, where a \emph{centipede} is a tree obtained by appending a pendant vertex to each vertex of degree $2$ of a path.
  \item Any tree $M_{a,b,c}\,(a,b,c\ge1)$ satisfying $b\notin\{1,3\}$ is determined by its Laplacian spectrum \cite{kn:Stanic09}, where $M_{a,b,c}$ is the tree obtained by attaching a pendant vertex to the vertices with distance $a+1$ and $a+b+1$ from one pendant vertex of $P_{a+b+c+1}$, respectively.
  \item Any tree $H_n(p,p)$ ($n\geq 2$, $p\geq 1$) is determined by its Laplacian spectrum \cite{kn:LiuDouble}, where $H_n(p,p)$ is the tree obtained by attaching $p$ pendant vertices to each pendant vertex of $P_n$.
  \item Any tree $T^2_n$ is determined by its Laplacian spectrum \cite{kn:Bu-Zhou-Li12}, where $T^2_n$ denotes the tree obtained by attaching two pendant vertices to every vertex of $P_n$.
  \item Any \emph{banana tree} $B_{n,k}$ satisfying $n^2\le k$ is determined by its Laplacian spectrum \cite{kn:Aalipour13}, where $B_{n,k}$ denotes the tree obtained by joining a vertex to one arbitrary pendant vertex of each copy of $n$-copies of the complete bipartite graph $K_{1,k}$.
\end{enumerate}

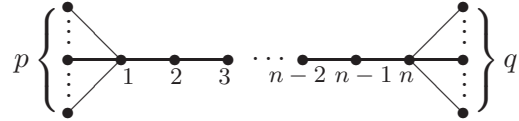
\begin{figure}[here]
\setlength{\unitlength}{4pt}
\begin{picture}(40,10)(-38.5,-4.8)
\put(0,5){\circle*{1.1}}
\put(-5,-0.7){$p\left\{\begin{array}{c}{~}\\{~}\\{~}\end{array}\right.$}
\put(34.2,-0.7){$\left.\begin{array}{c}{~}\\{~}\\{~}\end{array}\right\}q$}
\put(0,0){\circle*{1.1}}   \put(5.1,-2.3){\footnotesize{$1$}}
\put(9.6,-2.3){\footnotesize{$2$}}  \put(14.1,-2.3){\footnotesize{$3$}}
\put(17.2,0){$\ldots$}  \put(18.8,-2.1){\footnotesize{$n-2$}}
\put(25,-2.1){\footnotesize{$n-1$}} \put(31,-2.1){\footnotesize{$n$}}
\put(-0.25,1.5){$\vdots$}  \put(-0.25,-3.5){$\vdots$}
\put(36.75,1.5){$\vdots$}  \put(36.75,-3.5){$\vdots$}
\put(0,-5){\circle*{1.1}} \put(5,0){\circle*{1.1}}
\put(10,0){\circle*{1.1}} \put(15,0){\circle*{1.1}}
\put(22,0){\circle*{1.1}} \put(27,0){\circle*{1.1}}
\put(32,0){\circle*{1.1}} \put(37,0){\circle*{1.1}}
\put(37,5){\circle*{1.1}} \put(37,-5){\circle*{1.1}}
\put(0,5){\line(1,-1){5}} \put(0,0){\line(1,0){15}}
\put(22,0){\line(1,0){15}} \put(0,-5){\line(1,1){5}}
\put(32,0){\line(1,1){5}} \put(32,0){\line(1,-1){5}}
\end{picture}
\caption{The double starlike tree $H(p,n,q)$} \label{f1}
\end{figure}

We call a tree \emph{double starlike} if it has exactly two vertices of degree greater than two. Denote by $H(p,n,q)$ the double starlike tree obtained by attaching $p$ pendant vertices to one pendant vertex of $P_n$ and $q$ pendant vertices to the other pendant vertex of $P_n$ (shown in Fig. \ref{f1}). Without loss of generality, we suppose that $p\ge q\ge1$. In this paper, we prove that any double starlike tree $H(p,n,q)$ is determined by its Laplacian spectrum. We state our main result as follows.

\begin{thm}\label{mainthm}
Every double starlike tree $H(p,n,q)$ is determined by its Laplacian spectrum.
\end{thm}

\section{Preliminaries}
In this section we collect some known results that will be used in the proof of Theorem \ref{mainthm}.

\begin{lem}\label{spectrum}\emph{\cite{kn:vanDam03,kn:Oliveira02}}
For any graph, the following can be determined by its adjacency (or Laplacian) spectrum.
\begin{itemize}
\item[\rm (a)] The number of vertices.
\item[\rm (b)] The number of edges.
\end{itemize}
For any graph, the following can be determined by its adjacency spectrum.
\begin{itemize}
\item[\rm (c)] The number of closed walks of any length.
\end{itemize}
For any graph, the following can be determined by its Laplacian spectrum.
\begin{itemize}
\item[\rm (d)] The number of components.
\item[\rm (e)] The number of spanning trees.
\item[\rm (f)] The sum of the squares of degrees of vertices.
\end{itemize}
\end{lem}

\begin{lem}\label{4 cycles}\emph{\cite{kn:Cvetkovic87}} For any graph, the number of closed walks
of length $4$ is equal to twice the number of edges plus four times the
number of induced paths of length $2$ plus eight times the number
of $4$-cycles.
\end{lem}

Let $G$ be a graph. The \emph{line graph} $\mathcal{L}(G)$ of $G$ is the graph with vertices the edges of $G$ such that two vertices are adjacent in $\mathcal{L}(G)$ if and only if the corresponding edges have a common end-vertex in $G$.

\begin{lem}\label{Line graph}\emph{\cite{kn:Gutman99}} Let $T$ be a tree with $n$
vertices and $\mathcal {L}(T)$ its line graph. Then $\mu_i(T)=\lambda_i(\mathcal {L}(T))+2$ for
$i=1,2,\ldots,n-1$.
\end{lem}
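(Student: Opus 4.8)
The plan is to exploit the relation between $T$ and $\mathcal{L}(T)$ via the (unsigned) incidence matrix. Let $B$ be the $n\times(n-1)$ matrix with rows indexed by $V(T)$ and columns by $E(T)$, where $B_{v,e}=1$ if $v$ is an endpoint of $e$ and $B_{v,e}=0$ otherwise. A direct computation of the two Gram products gives $BB^{T}=D(T)+A(T)=Q(T)$, the signless Laplacian of $T$ (the diagonal entry at $v$ counts the edges at $v$, and the off-diagonal entry at $u,v$ is $1$ iff $uv\in E(T)$), and $B^{T}B=2I_{n-1}+A(\mathcal{L}(T))$ (each edge has two endpoints, and two distinct edges share exactly one endpoint precisely when they are adjacent in the line graph).

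Next I would invoke the standard fact that $BB^{T}$ and $B^{T}B$ have the same non-zero eigenvalues with the same multiplicities. Since $T$ is a tree, it is connected with $n-1$ edges, so $B$ has full column rank $n-1$; hence $B^{T}B$ is positive definite and its eigenvalues $\theta_1\ge\cdots\ge\theta_{n-1}>0$ are all positive, while the $n$ eigenvalues of $BB^{T}=Q(T)$ are exactly $\theta_1\ge\cdots\ge\theta_{n-1}>\theta_n=0$. From $B^{T}B=2I_{n-1}+A(\mathcal{L}(T))$ we get $\theta_i=\lambda_i(\mathcal{L}(T))+2$ for $i=1,\ldots,n-1$, since adding $2I$ shifts every eigenvalue by $2$ and preserves the decreasing order.

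Finally I would pass from $Q(T)$ back to $L(T)$ using that $T$ is bipartite: writing $V(T)=V_1\cup V_2$ for the bipartition and letting $S$ be the diagonal $\pm1$ matrix that is $+1$ on $V_1$ and $-1$ on $V_2$, one has $S^{-1}A(T)S=-A(T)$, so $S^{-1}Q(T)S=D(T)-A(T)=L(T)$. Thus $L(T)$ and $Q(T)$ are similar and cospectral, giving $\mu_i(T)=\theta_i$ for all $i$, and in particular $\mu_i(T)=\lambda_i(\mathcal{L}(T))+2$ for $i=1,\ldots,n-1$, as claimed.

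The two matrix-product identities and the bipartite similarity are routine; the only step needing care is the eigenvalue bookkeeping — verifying that $BB^{T}$ has exactly one zero eigenvalue more than $B^{T}B$, which hinges on the incidence matrix of the connected bipartite graph $T$ having nullity exactly $n-(n-1)=1$ on the vertex side — together with correctly aligning the decreasing orderings on the two sides. I expect this (modest) counting step to be the main obstacle.
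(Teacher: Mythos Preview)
Your argument is correct and is in fact the standard proof of this identity: the two Gram-product identities $BB^{T}=Q(T)$ and $B^{T}B=2I+A(\mathcal{L}(T))$, the equality of nonzero spectra of $BB^{T}$ and $B^{T}B$, and the bipartite similarity $S^{-1}Q(T)S=L(T)$ are all valid, and your rank bookkeeping (that the unsigned incidence matrix of a connected bipartite graph on $n$ vertices has rank $n-1$) is the right way to pin down the single extra zero eigenvalue. Note, however, that the paper does not supply its own proof of this lemma at all; it is quoted from \cite{kn:Gutman99} as a known result, so there is no in-paper argument to compare yours against.
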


\begin{lem}\label{Deleting}\emph{\cite{kn:Lotker07}}
Let $u$ be a vertex of G and $G-u$ the subgraph obtained from
$G$ by deleting $u$ together with its incident edges. Then
\[\mu_i(G)\geq\mu_{i}(G-u)\geq\mu_{i+1}(G)-1,~~i=1,2,\ldots,n-1.\]
\end{lem}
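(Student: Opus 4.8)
The plan is to prove both inequalities by one interlacing argument, after noting the single subtlety involved: $L(G-u)$ is \emph{not} a principal submatrix of $L(G)$. Let $M$ denote the $(n-1)\times(n-1)$ principal submatrix of the real symmetric matrix $L(G)$ obtained by deleting the row and column indexed by $u$. Then $M$ and $L(G-u)$ share all off-diagonal entries on $V(G)\setminus\{u\}$, but the diagonal entry of $M$ at each neighbour $v$ of $u$ still counts the (now absent) edge $uv$; hence $M=L(G-u)+D_u$, where $D_u$ is the diagonal matrix with a $1$ in position $v$ for every $v\sim u$ and $0$ elsewhere. The key point is that $D_u$ is a $\{0,1\}$-valued diagonal matrix, so both $D_u$ and $I-D_u$ are positive semidefinite; this is exactly what controls the discrepancy between $M$ and $L(G-u)$.

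First I would apply the Cauchy interlacing theorem to the principal submatrix $M$ of $L(G)$, which gives $\mu_i(G)\ge\lambda_i(M)\ge\mu_{i+1}(G)$ for $i=1,\dots,n-1$. Since $D_u$ is positive semidefinite, Weyl's inequality yields $\lambda_i(M)=\lambda_i\!\left(L(G-u)+D_u\right)\ge\lambda_i\!\left(L(G-u)\right)=\mu_i(G-u)$; combining this with the upper half of the interlacing bound gives the first asserted inequality $\mu_i(G)\ge\lambda_i(M)\ge\mu_i(G-u)$. For the second, I would apply Weyl in the other direction: because the largest eigenvalue of $D_u$ is at most $1$, we get $\mu_i(G-u)=\lambda_i\!\left(M-D_u\right)\ge\lambda_i(M)-1$, and then the lower half of the interlacing bound gives $\lambda_i(M)-1\ge\mu_{i+1}(G)-1$. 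Chaining these yields $\mu_i(G)\ge\mu_i(G-u)\ge\mu_{i+1}(G)-1$ for all $i=1,\dots,n-1$.

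I expect no genuine obstacle here; the only thing one must not overlook is that $L(G-u)$ differs from the principal submatrix $M$ by the correction term $D_u$, and that the facts ``$D_u\succeq 0$'' and ``$I-D_u\succeq 0$'' are precisely what make the two Weyl steps cost nothing in the first inequality and at most $1$ in the second. An alternative, equally short route is to argue directly from the Courant-Fischer min-max characterisation of eigenvalues, embedding $\mathbb{R}^{n-1}$ into $\mathbb{R}^{n}$ by zero-padding the coordinate at $u$ for the upper bounds and additionally allowing the direction $e_u$ for the lower bound; but the interlacing-plus-Weyl version is cleanest and involves no computation at all.
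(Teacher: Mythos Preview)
Your argument is correct: the decomposition $M=L(G-u)+D_u$ with $0\preceq D_u\preceq I$, followed by Cauchy interlacing for $M$ inside $L(G)$ and two applications of Weyl's inequality, establishes both bounds cleanly.

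As for comparison with the paper: there is nothing to compare. The paper does not prove this lemma at all; it is quoted as a known result from Lotker (2007) and used as a black box in the proof of Lemma~\ref{first second and third-Lap}. So your write-up is not an alternative to the paper's proof but rather a self-contained justification of a cited fact. Incidentally, your approach is essentially the one in Lotker's original note, so you have reconstructed the intended argument.
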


\begin{lem}\label{InterlacingEdge}\emph{\cite{kn:Cvetkovic2010}}
Let $e$ be an edge of $G$ and $G'=G-e$ the subgraph obtained from $G$ by deleting $e$. Then
\[\mu_1(G)\geq \mu_1(G')\geq \mu_2(G)\geq \mu_2(G')\geq\cdots\geq\mu_{n-1}(G)\geq \mu_{n-1}(G')\geq\mu_n(G)=\mu_n(G')=0. \]
\end{lem}

\begin{lem}\label{interlacing}\emph{\cite{kn:vanDam03}}   Suppose that $N$ is a
symmetric $n \times n$ matrix with eigenvalues $\alpha_1\geq
\alpha_2\geq\cdots\geq\alpha_n$. Then the eigenvalues
$\alpha'_1\geq\alpha'_2\geq\cdots\geq\alpha'_m$ of a principal
submatrix of $N$ of size $m$ satisfy
$\alpha_i\geq\alpha'_i\geq\alpha_{n-m+i}$ for $i=1,2,\ldots,m$.
\end{lem}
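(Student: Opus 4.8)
The plan is to derive this from the Courant--Fischer min--max characterization of the eigenvalues of a real symmetric matrix (the statement is essentially the Cauchy interlacing / Poincar\'e separation theorem). Recall that for a symmetric $n\times n$ matrix $N$ with eigenvalues $\alpha_1\geq\cdots\geq\alpha_n$ one has, for each $i$,
\[
\alpha_i=\max_{\dim S=i}\ \min_{0\neq x\in S}\frac{x^{T}Nx}{x^{T}x}
       =\min_{\dim S=n-i+1}\ \max_{0\neq x\in S}\frac{x^{T}Nx}{x^{T}x},
\]
where $S$ ranges over subspaces of $\mathbb{R}^n$ of the indicated dimension. After permuting indices (which changes neither spectrum) we may assume the principal submatrix, call it $B$, occupies the first $m$ coordinates; identify $\mathbb{R}^m$ with the coordinate subspace $W=\{x\in\mathbb{R}^n:x_{m+1}=\cdots=x_n=0\}$. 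The only computation needed is the elementary observation that if $x\in W$ has first $m$ coordinates equal to $y$, then $x^{T}Nx=y^{T}By$ and $x^{T}x=y^{T}y$, so the Rayleigh quotients of $N$ restricted to $W$ and of $B$ on $\mathbb{R}^m$ coincide.

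For the inequality $\alpha'_i\leq\alpha_i$, apply the max--min formula to $B$: each $i$-dimensional subspace of $\mathbb{R}^m$ yields an $i$-dimensional subspace of $\mathbb{R}^n$ lying inside $W$, on which the two Rayleigh quotients agree, so the maximum defining $\alpha'_i$ is taken over a subfamily of the subspaces appearing in the formula for $\alpha_i$, giving $\alpha'_i\leq\alpha_i$. For the inequality $\alpha'_i\geq\alpha_{n-m+i}$ one option is to repeat the argument with the min--max form and the $(n-i+1)$-type subspaces intersected with $W$; a slicker route is to apply the bound just proved to $-N$, whose eigenvalues are $-\alpha_n\geq\cdots\geq-\alpha_1$ and whose corresponding principal submatrix is $-B$ with eigenvalues $-\alpha'_m\geq\cdots\geq-\alpha'_1$. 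The established bound says the $k$-th largest eigenvalue of $-B$ is at most the $k$-th largest eigenvalue of $-N$, i.e. $-\alpha'_{m-k+1}\leq-\alpha_{n-k+1}$; setting $k=m-i+1$ and rearranging gives $\alpha'_i\geq\alpha_{n-m+i}$, and combining the two halves yields the claim.

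This argument has essentially no obstacle: the analytic content is exhausted by Courant--Fischer together with the Rayleigh-quotient identity on $W$, and the only thing to watch is the index bookkeeping so that the two halves assemble into the single chain $\alpha_i\geq\alpha'_i\geq\alpha_{n-m+i}$. An alternative, more pedestrian proof deletes one coordinate at a time, uses the standard interlacing between a symmetric matrix and its $(n-1)$-dimensional principal submatrix (a sign-change argument for the characteristic polynomial of a bordered matrix), and then composes these $n-m$ single-step interlacings; this avoids min--max but requires somewhat more care in chaining the resulting inequalities.
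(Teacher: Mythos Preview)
Your proof via Courant--Fischer is correct and is the standard argument for the Cauchy interlacing theorem; the index bookkeeping in the $-N$ trick is handled properly. The paper itself does not prove this lemma: it is stated in the preliminaries with a citation to \cite{kn:vanDam03} and used as a black box, so there is no in-paper proof to compare against.
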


\begin{lem}\label{largest degree}\emph{\cite{kn:Kelmans74,kn:Li98}}
Let $G$ be a graph
with $V(G)\neq\emptyset$ and $E(G)\neq\emptyset$. Then
\begin{eqnarray*}
   & &d_1+1\leq\mu_{1}(G)\leq
\max\left\{\frac{d_i(d_i+m_i)+d_j(d_j+m_j)}{d_i+d_j},~~ v_iv_j\in
E(G)\right\},
\end{eqnarray*}
where $m_i$ denotes the average of the degrees of the vertices
adjacent to $v_i$ in $G$.
 \end{lem}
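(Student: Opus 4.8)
The two inequalities are independent, and I would prove each separately; the lower bound is elementary, and essentially all of the content sits in the upper bound. For the \emph{lower bound} $\mu_1\ge d_1+1$ I would work directly with the Rayleigh quotient $\mu_1=\max_{x\neq0}\big(\sum_{v_iv_j\in E(G)}(x_i-x_j)^2\big)/(x^{\mathrm{T}}x)$. Fix a vertex $v$ with $d_G(v)=d_1$ (possible since $E(G)\neq\emptyset$) and test the vector $x$ with $x_v=d_1$, $x_u=-1$ for every $u$ adjacent to $v$, and $x_w=0$ otherwise. Then $x^{\mathrm{T}}x=d_1^2+d_1$, while the numerator is at least $\sum_{u\sim v}(d_1+1)^2=d_1(d_1+1)^2$ because every remaining edge contributes a nonnegative term; dividing gives $\mu_1\ge d_1+1$. (Equivalently, deleting from $G$ every edge not incident with $v$ subtracts from $L(G)$ a sum of rank-one positive semidefinite matrices and leaves $K_{1,d_1}$ together with isolated vertices, whose largest Laplacian eigenvalue is $d_1+1$, after which Weyl's inequality finishes it; note that Lemma~\ref{interlacing} applied merely to a $2\times2$ principal submatrix would only give the weaker $\mu_1\ge d_1$.)

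For the \emph{upper bound} I would pass to the incidence matrix. Writing $C$ for the oriented vertex--edge incidence matrix of $G$, we have $L(G)=CC^{\mathrm{T}}$, so $L(G)$ and the $|E(G)|\times|E(G)|$ matrix $M:=C^{\mathrm{T}}C$ share all nonzero eigenvalues; since $M\succeq0$ and $M\neq0$, in fact $\mu_1=\lambda_1(M)$. Here $M$ is indexed by the edges of $G$, with $M_{ee}=2$ for every edge $e$, $M_{ef}=\pm1$ when $e$ and $f$ meet in exactly one vertex, and $M_{ef}=0$ otherwise. For any choice of positive weights $w_e$ ($e\in E(G)$), the matrix $W^{-1}MW$ with $W=\mathrm{diag}(w_e)$ is similar to $M$, hence has the same real nonnegative spectrum, and Gershgorin's disc theorem yields
\[
\mu_1\ \le\ \max_{e=v_iv_j\in E(G)}\Big(2+\frac1{w_e}\sum_{\substack{f\neq e\\ f\cap e\neq\emptyset}}w_f\Big)\ =\ \max_{e=v_iv_j\in E(G)}\Big(2+\frac1{w_e}\Big(\sum_{\substack{v_k\sim v_i\\ v_k\neq v_j}}w_{v_iv_k}+\sum_{\substack{v_k\sim v_j\\ v_k\neq v_i}}w_{v_jv_k}\Big)\Big).
\]

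The decisive step is the choice of weights: I would take $w_{v_iv_j}:=d_i+d_j$. Then, using $\sum_{v_k\sim v_i}d_k=d_im_i$, a short computation gives $\sum_{v_k\sim v_i,\ v_k\neq v_j}w_{v_iv_k}=d_i(d_i+m_i)-d_i-d_j$, and symmetrically at $v_j$; substituting into the display above and simplifying (the two copies of $-(d_i+d_j)$ cancel the leading $+2$) collapses the right-hand side to exactly
\[
\mu_1\ \le\ \max_{v_iv_j\in E(G)}\frac{d_i(d_i+m_i)+d_j(d_j+m_j)}{d_i+d_j},
\]
which is the claimed inequality. The only genuine obstacle is discovering that $w_{v_iv_j}=d_i+d_j$ is the weighting that makes the Gershgorin estimate sharp in this particular form; with the trivial choice $w_e\equiv1$ the very same argument already produces the well-known bound $\mu_1\le\max\{d_i+d_j:v_iv_j\in E(G)\}$, and everything else is routine arithmetic with the definition of $m_i$.
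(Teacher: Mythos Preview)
The paper does not prove this lemma; it is quoted without proof from \cite{kn:Kelmans74,kn:Li98}, so there is nothing to compare your argument against on the paper's side.

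Your proof is correct. The Rayleigh-quotient test vector with $x_v=d_1$, $x_u=-1$ for $u\sim v$, $x_w=0$ otherwise gives numerator at least $d_1(d_1+1)^2$ and denominator $d_1(d_1+1)$, yielding $\mu_1\ge d_1+1$ exactly as you say. For the upper bound, passing to $M=C^{\mathrm T}C$ and applying weighted Gershgorin with $w_{v_iv_j}=d_i+d_j$ is essentially the argument of Li and Zhang in \cite{kn:Li98} (they work instead with the nonnegative matrix $D+A$, which has the same largest eigenvalue as $L$ in absolute value for bipartite graphs and dominates it in general, but the arithmetic is identical). Your key computation $\sum_{v_k\sim v_i,\,v_k\neq v_j}(d_i+d_k)=d_i(d_i+m_i)-d_i-d_j$ is right, and the two copies of $-(d_i+d_j)$ do cancel the diagonal $+2$ after dividing by $w_e=d_i+d_j$, giving the stated bound on the nose.
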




\begin{lem}\label{Second Lap}\emph{\cite{kn:Li00Second}}
Let $G$ be a connected graph with $n\geq 3$ vertices. Then
$\mu_2(G)\geq d_2$.
\end{lem}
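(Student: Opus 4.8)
The plan is to exhibit a $2\times 2$ principal submatrix of $L(G)$ whose smaller eigenvalue is at least $d_2$ and then apply Cauchy interlacing (Lemma \ref{interlacing}): writing the Laplacian eigenvalues as $\mu_1\geq\cdots\geq\mu_n$, a principal submatrix of size two with eigenvalues $\alpha'_1\geq\alpha'_2$ satisfies $\mu_2\geq\alpha'_2$. Let $v_1,v_2$ be distinct vertices whose degrees $d_1\geq d_2$ are the two largest in $G$. The principal submatrix of $L(G)$ indexed by $\{v_1,v_2\}$ equals $\left(\begin{smallmatrix} d_1 & -a\\ -a & d_2\end{smallmatrix}\right)$, where $a=1$ if $v_1v_2\in E(G)$ and $a=0$ otherwise.

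If $v_1\not\sim v_2$ then $a=0$, the submatrix is $\mathrm{diag}(d_1,d_2)$, so $\alpha'_2=d_2$ and we are done; the very same argument settles the result whenever $G$ has two non-adjacent vertices each of degree at least $d_2$. Hence it remains to treat the case where all vertices of degree $\geq d_2$ are pairwise adjacent, in particular $v_1\sim v_2$. If $d_2=1$ this forces $G=K_{1,n-1}$, whose Laplacian spectrum is $\{n,1^{(n-2)},0\}$, so $\mu_2=1=d_2$; thus we may assume $d_2\geq 2$.

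In this adjacent case the $2\times 2$ submatrix has $\alpha'_2=\tfrac12\bigl(d_1+d_2-\sqrt{(d_1-d_2)^2+4}\bigr)<d_2$, so interlacing on two vertices no longer suffices, and I would instead delete the edge $e=v_1v_2$. Since $L(G)-L(G-e)$ is the Laplacian of a single edge, a rank-one positive semidefinite matrix, monotonicity of eigenvalues under positive semidefinite perturbation gives $\mu_2(G)\geq\mu_2(G-e)$. If $e$ is a bridge, then $G-e=G_1\sqcup G_2$ with $v_i\in G_i$; each $G_i$ has at least two vertices (else $d_2=1$), and in $G_i$ the vertex $v_i$ has degree $d_i-1$, so by the lower bound in Lemma \ref{largest degree}, $\mu_1(G_1)\geq d_1\geq d_2$ and $\mu_1(G_2)\geq d_2$. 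As the spectrum of $L(G-e)$ is the union of those of $L(G_1)$ and $L(G_2)$, it contains two values $\geq d_2$, whence $\mu_2(G-e)\geq d_2$ and therefore $\mu_2(G)\geq d_2$.

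The genuinely delicate point is the remaining subcase, in which $e=v_1v_2$ is not a bridge and $G-e$ is connected: deleting $e$ then only yields $\mu_2(G)\geq\mu_2(G-e)\geq d_2-1$, and this off-by-one loss is real, since a principal submatrix built from $v_1$, $v_2$ and a low-degree common neighbour can have second eigenvalue strictly below $d_2$. To recover the bound I would argue by induction on the number of edges: if $d_1>d_2$, or if some third vertex also has degree $\geq d_2$, one can delete $e$ (or a suitable edge incident to neither $v_1$ nor $v_2$) so that the smaller graph still has two vertices of degree $\geq d_2$ and the inductive hypothesis applies; the remaining tight configurations, in which all vertices of degree $\geq d_2$ have equal degree, are mutually adjacent, and have heavily overlapping neighbourhoods, have to be handled by interlacing against a larger principal submatrix spanning enough of $N(v_1)\cup N(v_2)$ that the full degrees of $v_1$ and $v_2$ are registered. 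I expect this last bookkeeping to be the only real obstacle; note moreover that every graph considered in the present paper is a tree, where all edges are bridges and the argument of the previous paragraph already completes the proof.
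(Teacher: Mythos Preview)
The paper does not prove this lemma; it is quoted without proof from Li and Pan \cite{kn:Li00Second} as a preliminary tool, so there is no in-paper argument to compare against. What matters, then, is whether your proposal stands on its own.

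Your non-adjacent case and your bridge case are both correct, and you are right that these two cases already suffice for trees --- which is the only setting in which the paper actually invokes the lemma (it is applied to $G'$ in the proof of Lemma~\ref{degreesequenceLem2}, after $G'$ has been shown to be a tree). As a proof of the lemma \emph{as stated}, however, there is a genuine gap. In the non-bridge subcase your induction does not close: deleting $e=v_1v_2$ gives only $\mu_2(G)\geq\mu_2(G-e)\geq d_2(G-e)$, which can be $d_2(G)-1$, and your fallback of deleting a cycle edge incident to neither $v_1$ nor $v_2$ fails when no such edge exists. A concrete obstruction is $G=K_4-ab$: here $d_1=d_2=3$, the edge $v_1v_2$ is not a bridge, every edge of $G$ meets $\{v_1,v_2\}$, and $G-v_1v_2\cong C_4$ has $\mu_2=2<3$, so neither branch of your sketch reaches the conclusion (even though in fact $\mu_2(G)=4\geq 3$). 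Your closing remark that ``this last bookkeeping'' is ``the only real obstacle'' is an honest acknowledgement that the argument is unfinished; to complete it you would need either a careful structural analysis of the residual graphs where every cycle edge meets $\{v_1,v_2\}$, or a different device altogether (for instance the quotient-matrix argument used in \cite{kn:Li00Second}).
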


\begin{lem}\label{third Lap}\emph{\cite{kn:Guo07Third}}
Let $G$ be a connected graph with $n\geq 4$ vertices. Then
$\mu_3(G)\geq d_3-1$.
\end{lem}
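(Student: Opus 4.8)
The plan is to bound $\mu_3$ from below by the third largest eigenvalue of a suitable principal submatrix of $L(G)$, using the Cauchy interlacing inequality of Lemma \ref{interlacing}; this is the $k=3$ analogue of the estimate $\mu_2\ge d_2$ in Lemma \ref{Second Lap}, carried out in the same spirit but requiring extra care. Since the vertices are indexed in non-increasing order of degree, $v_1,v_2,v_3$ are three vertices of largest degree with $d_1\ge d_2\ge d_3$. If $d_3\le 1$ then $d_3-1\le 0\le\mu_3$ and there is nothing to prove, so I assume $d_3\ge 2$. I would first take $M$ to be the $3\times 3$ principal submatrix of $L(G)$ indexed by $\{v_1,v_2,v_3\}$, so that $M=\mathrm{diag}(d_1,d_2,d_3)-A_0$, where $A_0$ is the adjacency matrix of the subgraph induced on these three vertices. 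By Lemma \ref{interlacing} the smallest eigenvalue of $M$ is at most $\mu_3$, so it suffices to estimate it. I would then split into the four possibilities for the induced subgraph: no edge, a single edge, a path on three vertices, or a triangle.

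In the first two cases the $3\times 3$ matrix already closes the argument. With no edge, $M$ is diagonal and its smallest eigenvalue is $d_3\ge d_3-1$. With exactly one edge, $M$ is block diagonal with a $1\times 1$ block $d_i\ge d_3$ and a $2\times 2$ block $\left(\begin{smallmatrix} d_a&-1\\-1&d_b\end{smallmatrix}\right)$ with $d_a,d_b\ge d_3$; an elementary computation shows the smaller eigenvalue of such a block is at least $\min(d_a,d_b)-1\ge d_3-1$, so again the smallest eigenvalue of $M$ is at least $d_3-1$, and $\mu_3\ge d_3-1$.

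The main obstacle is the remaining two cases, where the induced subgraph is a path $v_i\!-\!v_j\!-\!v_k$ or a triangle. Here the $3\times 3$ bound is too weak: Weyl's inequality gives only $d_3-\sqrt2$ in the path case and $d_3-2$ in the triangle case, both short of $d_3-1$. My remedy is to enlarge the principal submatrix. Because $d_3\ge 2$ and $G$ is connected with $n\ge 4$, the relevant vertices (the two endpoints of the induced path, or at least one of the triangle vertices) have neighbours outside $\{v_1,v_2,v_3\}$; I would adjoin one or two such private neighbours, obtaining a larger principal submatrix $B$ of order $4$ or $5$. The key point is that interlacing is now applied to the \emph{third largest} eigenvalue of $B$ rather than to its smallest one: since $B$ is still a principal submatrix of $L(G)$, Lemma \ref{interlacing} yields $\mu_3\ge(\text{third largest eigenvalue of }B)$, and the additional $-1$ entries coupling each endpoint to its private neighbour raise this interior eigenvalue up to $d_3-1$.

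Two delicate points remain, each needing a short case analysis. First, one must verify that enough distinct private neighbours exist in the path and triangle configurations; this is exactly where connectivity and $n\ge 4$ enter, ruling out the degenerate possibility that $v_1,v_2,v_3$ form an isolated triangle and showing that the number of available private neighbours matches what the argument needs. Second, one must estimate the third largest eigenvalue of the enlarged matrix $B$, which I would obtain either by expanding its characteristic polynomial or, equivalently, by exhibiting a three-dimensional subspace on which the Rayleigh quotient $x^{\top}Bx/x^{\top}x$ stays at least $d_3-1$, using $d_1,d_2\ge d_3$. That the bound is sharp, and hence that this enlargement cannot be avoided, is illustrated by the triangle with one pendant edge, whose Laplacian spectrum $\{4,3,1,0\}$ gives $\mu_3=1=d_3-1$. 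Assembling the four cases then yields $\mu_3\ge d_3-1$.
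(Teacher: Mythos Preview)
The paper does not give its own proof of this lemma at all: Lemma~\ref{third Lap} is simply quoted from Guo~\cite{kn:Guo07Third} as a known result, so there is nothing in the present paper to compare your argument against.

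As for the proposal itself, your overall strategy---Cauchy interlacing applied to the principal submatrix of $L(G)$ on $\{v_1,v_2,v_3\}$, followed by a case split on the induced subgraph---is indeed the natural route and is in the spirit of Guo's original argument. The first two cases (no edge, one edge) are handled cleanly and correctly. In the path and triangle cases you correctly observe that the $3\times 3$ bound falls short (giving only $d_3-\sqrt{2}$ and $d_3-2$ respectively) and that one must pass to a larger principal submatrix; this is exactly where the real work lies. However, what you present from that point on is a plan rather than a proof: you do not actually verify that the third largest eigenvalue of the enlarged $4\times4$ or $5\times5$ block is at least $d_3-1$, and the existence of the required ``private'' neighbours (and the treatment of overlapping neighbourhoods) needs a genuine case analysis that you only allude to. The diagonal entries of the adjoined rows are the \emph{full} degrees in $G$ of the added vertices, about which you have no a~priori lower bound beyond~$1$, so the Rayleigh-quotient argument must be set up with some care. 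None of this is insurmountable, and your closing example shows the bound is tight, but as written the hard half of the proof is still an outline.
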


\section{Proof of Theorem \ref{mainthm}}

In this section, we will prove Theorem  \ref{mainthm}. Before proceeding, we need to mention the following results.
\begin{prop}\label{partialprop}
\begin{itemize}
  \item[\rm (a)] If $n=1$, then $H(p,1,q) \cong K_{1,p+q}$, which is determined by its Laplacian spectrum \cite{kn:Omidi07}.
  \item[\rm (b)] If $n=2$ or $n=3$, then $H(p,n,q)$ is determined by its Laplacian spectrum \cite{kn:Shen06}.
  \item[\rm (c)] If $p=q=1$, then $H(1,n,1)\cong P_{n+2}$, which is determined by its Laplacian spectrum \cite{kn:van Dam03}.
  \item[\rm (d)] If $p>q=1$, then $H(p,n,1)$ is a starlike tree, which is determined by its Laplacian spectrum\cite{kn:Omidi07}.
  \item[\rm (e)] If $p=q\geq2$, then $H(p,n,p)\cong H_n(p,p)$, which is determined by its Laplacian spectrum \cite{kn:LiuDouble}.
\end{itemize}
\end{prop}
Proposition \ref{partialprop} tells us that we only need to consider whether any double starlike tree $H(p,n,q)$ for $n\geq4$ and $p> q\geq2$ is determined by its Laplacian spectrum. In the following, we first bound the largest, the second largest and the third largest Laplacian eigenvalues of such graphs.

\begin{lem}\label{first second and third-Lap}
Let $G=H(p,n,q)$ with $n\geq4$ and $p>q\geq2$. Then
\begin{itemize}
  \item[\rm (a)] $p+2 \leq \mu_1(G)\leq p+2+\dfrac{1}{p+2}$.
  \item[\rm (b)] $q+2 \leq \mu_2(G)\leq q+3+\dfrac{1}{q+2}$.
  \item[\rm (c)] $\mu_3(G)<4$.
\end{itemize}
\end{lem}
\begin{proof}
(a) The result follows from Lemma \ref{largest degree} by simple computations.
\medskip

(b) Let $u$ and $v$ be the vertices of degree $p+1$ and $q+1$ in $G$, respectively. By Lemma \ref{Deleting}, we
have
\[\mu_1(G)\geq\mu_1(G-u)\geq\mu_2(G)-1.\]
Lemma \ref{largest degree} implies that
\[\mu_1(G-u)\leq q+2+\dfrac{1}{q+2}.\]
Then
\[\mu_2(G)\leq\mu_1(G-u)+1\leq q+3+\dfrac{1}{q+2}.\]
Next, let $G_1$ be a subgraph of $G$ obtained by deleting an edge whose end-vertices are neither $u$ nor $v$. Clearly, $G_1$ has two connected components. Lemma \ref{largest degree} implies that the largest Laplacian eigenvalue of each component is at least $q+2$, that is, $\mu_2(G_1)\geq q+2$. By Lemma \ref{InterlacingEdge}, we have $\mu_2(G)\geq \mu_2(G_1) \geq q+2.$
\medskip

(c) Let $M_{uv}$ be the $(p+n+q-2)\times (p+n+q-2)$ principal submatrix of $L(G)$ formed by deleting the rows and columns corresponding to $u$ and $v$. Then the largest eigenvalue of $M_{uv}$ is less than $4$. By Lemma \ref{interlacing}, $\mu_3(G)<4$.
\qed\end{proof}

\begin{lem}\label{degreesequenceLem2}
Let $G=H(p,n,q)$ with $n\geq4$ and $p>q\geq2$. Suppose that a graph $G^{\prime}$ is $L$-cospectral with $G$. Then $G'$ is a double starlike tree with $\deg(G')=(p+1, q+1, \overbrace{2,\ldots,2}^{n-2}, \overbrace{1,\ldots,1}^{p+q})$.
\end{lem}

\begin{proof}
By (a), (b), (d) and (e) of Lemma \ref{spectrum}, $G^{\prime}$ is a tree with $n+p+q$ vertices and $n+p+q-1$ edges. Let $(d_1, d_2, \ldots, d_{n+p+q})$ be the non-increasing degree sequence of graph $G^{\prime}$, and denote by $n_i$ the number of vertices with degree $i$ in $G'$, for $i=1,2,\ldots,d_1$.  By Lemma \ref{largest degree} and (a) of Lemma \ref{first second and third-Lap}, we have $d_1+1 \leq\mu_1(G^{\prime})\leq p+2+\dfrac{1}{p+2}$. Then $d_1\le p+1$. By Lemma \ref{Second Lap} and (b) of Lemma \ref{first second and third-Lap}, we have $d_2\le \mu_2(G^{\prime})=\mu_2(G)\leq q+3+\dfrac{1}{q+2}$. Then $d_2\le q+3$. By Lemma \ref{third Lap} and (c) of Lemma \ref{first second and third-Lap}, we have $d_3\le\mu_3(G^{\prime})+1=\mu_3(G)+1<5$. Then $d_3\le 4$.

On the other hand, (a), (b) and
(f) of Lemma \ref{spectrum} imply the following equations:
\begin{eqnarray}
& &  \sum_{i=1}^{d_1}n_i=n+p+q, \label{vertex sum}\\
& &  \sum_{i=1}^{d_1}in_i=2(n+p+q-1), \label{edges sum}\\
& & \sum_{i=1}^{d_1}i^2n_i=(p+1)^2+(q+1)^2+4(n-2)+p+q.
    \label{degree sum}
\end{eqnarray}
Combining (\ref{vertex sum}), (\ref{edges sum}) and (\ref{degree sum}), we have
\begin{equation}\label{sum1}
\sum_{i=1}^{d_1}(i^2-3i+2)n_i=p^2+q^2-p-q.
\end{equation}

Note that Lemma \ref{Line graph} implies that line graphs $\mathcal {L}(G^{\prime})$ and $\mathcal {L}(G)$ are $A$-cospectral. Further, by (c) of Lemma \ref{spectrum}, $\mathcal {L}(G^{\prime})$ and $\mathcal {L}(G)$ have the same number of triangles, that is,
\begin{equation}\label{sanjiao}
\sum_{i=1}^{d_1}\begin{pmatrix}
            i \\
            3 \\
          \end{pmatrix}
 n_i=\begin{pmatrix}
            p+1 \\
            3 \\
          \end{pmatrix}
          +\begin{pmatrix}
            q+1 \\
            3 \\
          \end{pmatrix}.
\end{equation}
In the following we determine the degree sequence of $G^{\prime}$. Note that $d_1\leq p+1$, $d_2\leq q+3$ and $d_3\le4$. We consider the following cases.
\bigskip

\noindent\emph {Case 1. } $q=2$ or $q=3$. Assume that $d_1<p+1$, that is, $n_{p+1}=0$. By (\ref{sum1}) and (\ref{sanjiao}), we obtain that
\begin{eqnarray}\label{addEqu1}
 \begin{pmatrix}
            p+1 \\
            3 \\
          \end{pmatrix}
          +\begin{pmatrix}
            q+1 \\
            3 \\
          \end{pmatrix}=\sum_{i=1}^{d_1}\begin{pmatrix}
            i \\
            3 \\
          \end{pmatrix}
 n_i\leq \frac{p}{6}\sum_{i=1}^{d_1}(i-1)(i-2)n_i= \frac{p}{6}(p^2+q^2-p-q).
\end{eqnarray}
If $q=2$, that is, $p\ge3$, then by plugging $q=2$ into (\ref{addEqu1}), we get $p^2-3p+6\leq 0$, which contradicts $p\geq 3$. If $q=3$, that is, $p\ge4$, then by plugging $q=3$ into (\ref{addEqu1}) again, we have $p^2-7p+24\leq 0$, which contradicts $p\geq 4$. Thus $d_1=p+1$, that is, $n_{p+1}\geq 1$. Now, we assume that $n_{p+1}\geq 2$,  that is, there exist
at least $2$ vertices with degree $p+1$ in $G'$. Then by
(\ref{sanjiao}), we have
\begin{eqnarray*}
\begin{pmatrix}
            p+1 \\
            3 \\
          \end{pmatrix}
          +\begin{pmatrix}
            q+1 \\
            3 \\
          \end{pmatrix}=\sum_{i=1}^{d_1}\begin{pmatrix}
            i \\
            3 \\
          \end{pmatrix}n_i\geq 2\begin{pmatrix}
            p+1 \\
            3 \\
          \end{pmatrix}+\sum_{i=1}^{p}\begin{pmatrix}
            i \\
            3 \\
          \end{pmatrix}n_i,
\end{eqnarray*}
that is,
\begin{eqnarray*}
\begin{pmatrix}
            q+1 \\
            3 \\
          \end{pmatrix}
          -\begin{pmatrix}
            p+1 \\
            3 \\
          \end{pmatrix}\geq \sum_{i=1}^{p}\begin{pmatrix}
            i \\
            3 \\
          \end{pmatrix}n_i\ge0.
\end{eqnarray*}
This is a contradiction to $q<p$. Hence  $n_{p+1}=1$. Further, if $q=2$, by (\ref{sanjiao}), we have $n_i=0$ for $i=4,\ldots,p$ and $n_3=1$. By (\ref{vertex sum}) and (\ref{edges sum}), we have $n_2=n-2$ and $n_1=p+2$. So $\deg(G')=(p+1, 3, \overbrace{2,\ldots,2}^{n-2}, \overbrace{1,\ldots,1}^{p+2})$.
If $q=3$, by (\ref{sanjiao}), we have $n_4\leq1$ and $n_i=0$ for $i=5,\ldots,p$. By
(\ref{vertex sum}), (\ref{edges sum}) and (\ref{degree sum}), it is
ready to obtain that $n_4=1$, $n_3=0$, $n_2=n-2$ and
$n_1=p+3$. So $\deg(G')=(p+1, 4, \overbrace{2,\ldots,2}^{n-2}, \overbrace{1,\ldots,1}^{p+3})$.

\bigskip

\noindent\emph {Case 2. } $q\geq 4$. Clearly, $d_1\geq 4$. Otherwise, assume that $d_1=3$. Then (\ref{sum1}) and (\ref{sanjiao}) imply that $n_3=\dfrac{1}{2}(p^2+q^2-p-q)$ and $n_3=
\begin{pmatrix}
            q+1 \\
            3 \\
          \end{pmatrix}
          +\begin{pmatrix}
            p+1 \\
            3 \\
          \end{pmatrix}$,
respectively. This is a contradiction. On the other hand, by combining (\ref{sum1}) and (\ref{sanjiao}), we obtain that
\begin{eqnarray*}\label{sum2}
\sum_{i=1}^{d_1}(i-1)(i-2)(i-3)n_i=p^3+q^3-3p^2-3q^2+2p+2q.
\end{eqnarray*}
That is,
\begin{equation}\label{sum3}
6n_4+\sum_{i=5}^{d_1}(i-1)(i-2)(i-3)n_i=p(p-1)(p-2)+q(q-1)(q-2).
\end{equation}
Note that $d_3\leq4$, which implies there exist at most two vertices of degree strictly greater than $4$ in $G^{\prime}$. Consider the following cases.

\medskip

\noindent\emph {Case 2.1.} $d_1=4$ and $d_2\leq 4$. By (\ref{sum3}), we have
\begin{equation}\label{AddEquals2}
6n_4=p(p-1)(p-2)+q(q-1)(q-2).
\end{equation}
Plugging (\ref{AddEquals2}) back into (\ref{sum1}), we obtain that
\[n_3=-\frac{1}{2}p(p-1)(p-3)-\frac{1}{2}q(q-1)(q-3)<0,\]
which is a contradiction to $n_3\ge0$.

\medskip

\noindent\emph {Case 2.2.} $d_1>4$ and $d_2=4$. By (\ref{sum3}), we have
\begin{equation}\label{AddEquals3}
6n_4=p(p-1)(p-2)+q(q-1)(q-2)-(d_1-1)(d_1-2)(d_1-3).
\end{equation}
Plugging (\ref{AddEquals3}) back into (\ref{sum1}), we obtain that
\[n_3=\frac{1}{2}(d_1-1)(d_1-2)(d_1-4)-\frac{1}{2}p(p-1)(p-3)-\frac{1}{2}q(q-1)(q-3).\]
Since $d_1\leq p+1$ and  $q\geq4$, we have $n_3<0$, which again contradicts $n_3\ge0$.

\medskip

\noindent\emph {Case 2.3.} $d_1>4$ and $d_2>4$. By (\ref{sum3}), we have
\begin{equation}\label{n_4}
n_4=\begin{pmatrix}
            p \\
            3 \\
          \end{pmatrix}
 +\begin{pmatrix}
            q \\
            3 \\
          \end{pmatrix}
          -\begin{pmatrix}
            d_1-1 \\
            3 \\
          \end{pmatrix}
          -\begin{pmatrix}
            d_2-1 \\
            3 \\
          \end{pmatrix}.
\end{equation}
Plugging (\ref{n_4}) back into (\ref{sum1}), we obtain that
\begin{equation}\label{n_3}
n_3=\frac{1}{2}(d_1-1)(d_1-2)(d_1-4)-\frac{1}{2}p(p-1)(p-3)+\frac{1}{2}(d_2-1)(d_2-2)(d_2-4)-\frac{1}{2}q(q-1)(q-3).
\end{equation}
Note that $d_1\leq p+1$ and $d_2\leq q+3$. Now we assume that $d_1<p+1$, that is,
$n_{p+1}=0$. Then consider the following cases.

If $d_1=p$ and $d_2=q+3$, then by (\ref{n_4}) and (\ref{n_3}), we have
\[n_4=\frac{1}{2}p^2-\frac{3}{2}p+1-q^2,\]
and
\begin{equation}\label{AddEqua33}
n_3=-\frac{3}{2}p^2+\frac{11}{2}p-5+3q^2-2q=-\frac{1}{2}(p-2)(3p-5)+3q^2-2q.
\end{equation}
Then $n_4\geq0$ implies that $p\geq\dfrac{3+\sqrt{8q^2+1}}{2}$. Substituting $p$ into (\ref{AddEqua33}), we obtain that
\[n_3\leq-\frac{1}{2}\left(\dfrac{3+\sqrt{8q^2+1}}{2}-2\right)\left(3\times\dfrac{3+\sqrt{8q^2+1}}{2}-5\right)+3q^2-2q=-\frac{1}{2}+\frac{1}{2}\sqrt{8q^2+1}-2q<0,\]
which contradicts $n_3\ge0$.

If $d_1=p-1$ and $d_2=q+3$, then by (\ref{n_4}) and (\ref{n_3}), we
have\[n_4=p^2-4p-q^2+4=(p-2-q)(p-2+q),\] and
\[n_3=-3p^2+14p-16+3q^2-2q=-(p-2)(3p-8)+3q^2-2q.\]
Then $n_4\geq0$ implies that $p\geq q+2$. Substituting $p\geq q+2$ into the above expression of $n_3$, we have
\[n_3\leq-q(3q-2)+3q^2-2q=0.\] Therefore $p=q+2,$ and $n_3=n_4=0.$
But now $d_1=p-1=q+1<d_2=q+3,$ which is a contradiction to $d_1\ge d_2$.

If $d_1\leq p-2$ and $d_2=q+3$, then $q+3=d_2\leq d_1\leq p-2$ implies that $p\geq q+5.$ Plugging $d_1\leq p-2$ and $d_2=q+3$ back into (\ref{n_3}), we have
\[n_3\leq-\frac{9}{2}p^2+\frac{51}{2}p-37+3q^2-2q\leq-\frac{9}{2}(q+5)^2+\frac{51}{2}(q+5)-37+3q^2-2q\leq-\frac{3}{2}q^2-\frac{43}{2}q-22<0,\]
which again contradicts $n_3\ge0$.

If $d_1\leq p$ and $d_2\leq q+2$, then by (\ref{n_3}) and $p\geq q+1$, we have
\[n_3\leq -\frac{3}{2}p^2+\frac{11}{2}p-4+\frac{3}{2}q^2-\frac{5}{2}q=-\frac{1}{2}(p-1)(3p-8)+\frac{3}{2}q^2-\frac{5}{2}q\leq-\frac{1}{2}q(3q-5)+\frac{3}{2}q^2-\frac{5}{2}q=0.\]
Thus $n_3=0$, which implies that $d_1=p$, $d_2=q+2$ and $p=q+1$. But
now $d_1=p=q+1<q+2=d_2$, which is also a contradiction to $d_1\ge d_2$.

So by the above discussions, we conclude that $d_1=p+1$, that is,
$n_{p+1}\geq1$. Now, assume that $n_{p+1}\geq 2$, that is, there exist at
least $2$ vertices with  degree $p+1$ in $G'$. Then by
(\ref{sanjiao}), we have
\begin{eqnarray*}
\begin{pmatrix}
            p+1 \\
            3 \\
          \end{pmatrix}
          +\begin{pmatrix}
            q+1 \\
            3 \\
          \end{pmatrix}=\sum_{i=1}^{d_1}\begin{pmatrix}
            i \\
            3 \\
          \end{pmatrix}n_i\geq 2\begin{pmatrix}
            p+1 \\
            3 \\
          \end{pmatrix}+\sum_{i=1}^{p}\begin{pmatrix}
            i \\
            3 \\
          \end{pmatrix}n_i,
\end{eqnarray*}
that is,
\begin{eqnarray*}
\begin{pmatrix}
            q+1 \\
            3 \\
          \end{pmatrix}
          -\begin{pmatrix}
            p+1 \\
            3 \\
          \end{pmatrix}\geq \sum_{i=1}^{p}\begin{pmatrix}
            i \\
            3 \\
          \end{pmatrix}n_i\ge0.
\end{eqnarray*}
This is a contradiction to $q<p$. Hence $n_{p+1}=1$. For
$q\geq4$,   by  (\ref{n_4}) and  (\ref{n_3}), it is easy to see that
$d_2=q+1$. By (\ref{sanjiao}), we have $n_i=0$ for $i=3,4,\ldots,q,
q+2,\ldots,p$.  By (\ref{vertex sum}) and (\ref{edges sum}), we obtain that
$n_1=p+q$ and $n_2=n-2$.
Therefore, $\deg(G')=(p+1, q+1,
\overbrace{2,\ldots,2}^{n-2}, \overbrace{1,\ldots,1}^{p+q})$.

This completes the proof. \qed\end{proof}

\begin{prop}\label{mainprop}
Any double starlike tree $H(p,n,q)$ for $n\geq4$ and $p>q\geq2$ is determined by its Laplacian spectrum.
\end{prop}
\begin{proof}
Let $G$ denote a double starlike tree $H(p,n,q)$ with $n\geq4$ and $p>q\geq2$. Suppose that $G^{\prime}$ is $L$-cospectral to $G$. By Lemma \ref{degreesequenceLem2}, $G^{\prime}$ is a double starlike tree with $\deg(G')=(p+1, q+1,
\overbrace{2,\ldots,2}^{n-2}, \overbrace{1,\ldots,1}^{p+q})$. In the following, we show that $G^{\prime}$ is isomorphic to $G$.

First, we assume that two vertices of degree greater than two are adjacent in $G'$. Suppose that there exist $q-a$ (respectively, $p-b$) pendant vertices adjacent to the vertex of degree $q+1$ (respectively, $p+1$) in $G'$, where $a$ and $b$ are nonnegative integers satisfying that $0\leq a \leq q $ and  $0\leq b\leq p$. Denote by $n_i'$ the number of vertices with degree $i$ in $\mathcal {L}(G^{\prime})$. Then $n_1'=a+b$, $n_{p+q}'=1$, $n_{p+1}'=b$, $n_{p}'=p-b$, $n_{q+1}'=a$, $n_{q}'=q-a$, $n_2'=n-2-a-b$ and $n_{j}'=0$ for $j\notin \{1,2,p,q,p+1,q+1,p+q\}$. On the other hand, it is easy to obtain that $\deg(\mathcal {L}(G))=(p+1,q+1,\overbrace{p,\ldots,p}^p,\overbrace{q,\ldots,q}^q,\overbrace{2,\ldots,2}^{n-3}).$ Note that $\mathcal {L}(G)$ and
$\mathcal {L}(G^{\prime})$ are $A$-cospectral. By (b) and (c) of Lemma \ref{spectrum}, $\mathcal {L}(G)$ and $\mathcal {L}(G^{\prime})$ have the same number of edges and the same number of closed walks of length $4$.
Moreover, they have the same  number of $4$-cycles. Lemma \ref{4
cycles} implies that $\mathcal {L}(G)$ and $\mathcal {L}(G^{\prime})$
have the same number of induced paths of length $2$, that is,
\begin{eqnarray}
  & & \begin{pmatrix}
            p+1 \\
            2 \\
          \end{pmatrix}+\begin{pmatrix}
            q+1 \\
            2 \\
          \end{pmatrix}+p\begin{pmatrix}
             p \\
            2 \\
            \end{pmatrix}+q\begin{pmatrix}
            q \\
            2 \\
           \end{pmatrix}+(n-3)\begin{pmatrix}
            2 \\
            2 \\
          \end{pmatrix}\nonumber\\
& &=\begin{pmatrix}
            p+q \\
            2 \\
          \end{pmatrix}+
       b\begin{pmatrix}
            p+1 \\
            2 \\
          \end{pmatrix}+(p-b)\begin{pmatrix}
            p\\
            2 \\
          \end{pmatrix}+a\begin{pmatrix}
            q+1 \\
            2 \\
          \end{pmatrix}+(q-a)\begin{pmatrix}
            q \\
            2 \\
          \end{pmatrix}+(n-2-a-b)\begin{pmatrix}
            2 \\
            2 \\
          \end{pmatrix}.\nonumber\\
          &&\label{addEqa34}
\end{eqnarray}
Simplifying (\ref{addEqa34}), we obtain that
\[(p-1)(q-1)+b(p-1)+a(q-1)=0.\]
Note that $p>q\geq2$, $0\leq a \leq q $ and $0\leq b\leq p$. We can easily verify that
\[(p-1)(q-1)+b(p-1)+a(q-1)\neq0.\] We then obtain a contradiction.

\begin{figure}[here]
\setlength{\unitlength}{4pt}
\begin{picture}(26,15.5)(-37,-5.8)
\put(0,5){\circle*{1.1}} \put(0,0){\circle*{1.1}}
\put(4.5,-2.3){\footnotesize{1}} \put(9.5,-2.3){\footnotesize{2}}
\put(14,-2.3){\footnotesize{3}}
\put(17.2,0){$\ldots$} 
\put(23.2,-2.3){\footnotesize{$l_1-1$}}\put(31.2,-2.3){\footnotesize{$l_1$}}
\put(-0.25,1.5){$\vdots$} \put(-0.25,-3.5){$\vdots$}
\put(36.75,1.5){$\vdots$} \put(36.75,-3.5){$\vdots$}
\put(0,0){\line(-1,0){5}} \put(-5,0){\circle*{1.1}}
\put(-5.5,1){\footnotesize{1}} \put(-5,0){\line(-1,0){5}}
\put(-10,0){\circle*{1.1}} \put(-10.5,1){\footnotesize{2}}
\put(-14,0){$\ldots$} \put(-15,0){\circle*{1.1}}
\put(-17.5,1){\footnotesize{$l_b''-1$}} \put(-15,0){\line(-1,0){5}}
\put(-20,0){\circle*{1.1}} \put(-20.5,1){\footnotesize{$l_b''$}}
\put(0,-5){\circle*{1.1}} \put(0,-5){\line(-1,0){5}}
\put(-5,-5){\circle*{1.1}} \put(-5.5,-4){\footnotesize{1}}
\put(-5,-5){\line(-1,0){5}} \put(-10,-5){\circle*{1.1}}
\put(-10.5,-4){\footnotesize{2}} \put(-14,-5){$\ldots$}
\put(-15,-5){\circle*{1.1}} \put(-17.5,-4){\footnotesize{$l_1''-1$}}
\put(-15,-5){\line(-1,0){5}} \put(-20,-5){\circle*{1.1}}
\put(-20.5,-4){\footnotesize{$l_1''$}}
\put(-28.5,-1){$p\left\{\begin{array}{c}{~}\\{~}\\{~}\end{array}\right.$}
\put(-24.5,-3.1){$b\left\{\begin{array}{c}{~}\\{~}\end{array}\right.$}
\put(5,0){\circle*{1.1}} \put(10,0){\circle*{1.1}}
\put(15,0){\circle*{1.1}} \put(22,0){\circle*{1.1}}
\put(27,0){\circle*{1.1}} \put(32,0){\circle*{1.1}}
\put(37,0){\circle*{1.1}} \put(37,0){\line(1,0){5}}
\put(42,0){\circle*{1.1}} \put(42,0){\line(1,0){5}}
\put(47,0){\circle*{1.1}} \put(47,1){\footnotesize{2}}
\put(42,1){\footnotesize{1}} \put(48,0){$\ldots$} \put(52,0){\circle*{1.1}}
\put(50,1){\footnotesize{$l_a'-1$}} \put(52,0){\line(1,0){5}}
\put(57,0){\circle*{1.1}} \put(57,1){\footnotesize{$l_a'$}}
\put(37,5){\circle*{1.1}} \put(42,5){\circle*{1.1}}
\put(37,5){\line(1,0){5}} \put(42,5){\line(1,0){5}}
\put(47,5){\circle*{1.1}} \put(47,6){\footnotesize{2}}
\put(42,6){\footnotesize{1}} \put(48,5){$\ldots$} \put(52,5){\circle*{1.1}}
\put(50,6){\footnotesize{$l_1'-1$}} \put(52,5){\line(1,0){5}}
\put(57,5){\circle*{1.1}} \put(57,6){\footnotesize{$l_1'$}}
\put(55,2){$\left.\begin{array}{c}{~}\\{~}\end{array}\right\}a$}
\put(58.5,-1){$\left.\begin{array}{c}{~}\\{~}\\{~}\end{array}\right\}q$}
\put(37,-5){\circle*{1.1}} \put(0,5){\line(1,-1){5}}
\put(0,0){\line(1,0){15}} \put(22,0){\line(1,0){15}}
\put(0,-5){\line(1,1){5}} \put(32,0){\line(1,1){5}}
\put(32,0){\line(1,-1){5}}
\end{picture}
\caption{The graph $G'$}\label{addf4}
\end{figure}
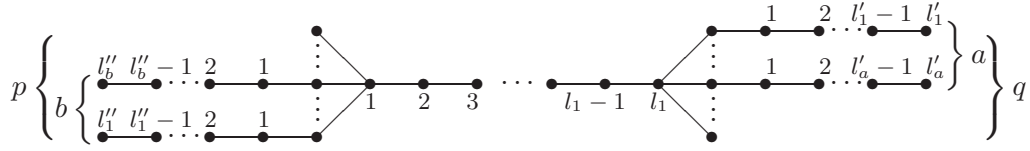

Now assume that two vertices of degree greater than two are not adjacent in $G'$ as shown in Fig. \ref{addf4}.  Again, suppose that there exist $q-a$ (respectively, $p-b$) pendant vertices adjacent to the vertex of degree $q+1$ (respectively, $p+1$) in $G'$, where $a$ and $b$ are nonnegative integers satisfying that $0\leq a \leq q $ and  $0\leq b\leq p$. Then, by counting the number of vertices in $G'$ and $G$, we obtain that
\[l_1+\sum_{i=1}^{a}l_i'+\sum_{j=1}^{b}l_j''+(p+q)=n+p+q,\]
that is,
\begin{equation}\label{sum of cospectral}
l_1+\sum_{i=1}^{a}l_i'+\sum_{j=1}^{b}l_j''=n,
\end{equation}
where $l_1$, $l_i'$ for $1\leq i \leq a$ and $l_j''$ for $1\leq j\leq b$ are positive integers shown in Fig. \ref{addf4}. Again, denote by $n_i'$ the number of vertices with degree $i$ in $\mathcal {L}(G^{\prime})$. Then $n_1'=a+b $,
$n_{p+1}'=b+1$, $n_{p}'=p-b$, $n_{q+1}'=a+1$, $n_{q}'=q-a$, $n_2'=n-3-a-b$ and $n_{j}'=0$ for $j\notin \{1,2,p,q,p+1,q+1\}$. Similar to the discussion above, $\mathcal {L}(G)$ and $\mathcal {L}(G^{\prime})$
have the same number of induced paths of length $2$, that is,
\begin{eqnarray}
  & & \begin{pmatrix}
            p+1 \\
            2 \\
          \end{pmatrix}+\begin{pmatrix}
            q+1 \\
            2 \\
          \end{pmatrix}+p\begin{pmatrix}
             p \\
            2 \\
            \end{pmatrix}+q\begin{pmatrix}
            q \\
            2 \\
           \end{pmatrix}+(n-3)\begin{pmatrix}
            2 \\
            2 \\
          \end{pmatrix}\nonumber\\
& &=(b+1)\begin{pmatrix}
            p+1 \\
            2 \\
          \end{pmatrix}+
       (p-b)\begin{pmatrix}
            p \\
            2 \\
          \end{pmatrix}+(a+1)\begin{pmatrix}
            q+1 \\
            2 \\
          \end{pmatrix}+(q-a)\begin{pmatrix}
            q \\
            2 \\
          \end{pmatrix}+(n-3-a-b)\begin{pmatrix}
            2 \\
            2 \\
          \end{pmatrix}.\nonumber\\
       &&\label{AddEquallss}
\end{eqnarray}
Simplifying (\ref{AddEquallss}), we obtain that
\[a(q-1)+b(p-1)=0,\]
which implies that $a=0$ and $b=0$. Plugging $a=0$ and $b=0$ back into (\ref{sum of
cospectral}), we have $l_1=n$. Therefore, $G'$ is isomorphic to
$G$.

This completes the proof. \qed\end{proof}

\begin{Tproof} \textbf{of Theorem \ref{mainthm}.} The result follows from Propositions \ref{partialprop} and \ref{mainprop} immediately. \qed \end{Tproof}

\begin{rem}
{\em The question of whether any double starlike tree $H(p,n,q)$ is determined by its Laplacian spectrum was first posed in \cite{kn:LiuDouble} after the authors proved that $H(p,n,p)$ is determined by its Laplacian spectrum. Conclusion of \cite{kn:LiuDouble} indicates that the crucial point of this question is to determine $\deg(G')$, where $G'$ is $L$-cospectral with $H(p,n,q)$. We solve this in Lemma \ref{degreesequenceLem2}, which helps us to completely prove that any double starlike tree $H(p,n,q)$ is determined by its Laplacian spectrum. We need to mention that this question was also investigated by other researchers. However, they just solved this question partially. We friendly collect these results as follows:  Any double starlike trees $H(p,n,q)$ satisfying $q=2$ or $p\ge q^2$  or $p-q=1, 2, 7$ is determined by its Laplacian spectrum \cite{kn:Aalipour13,kn:Lu08,kn:Lu09,kn:Lu10,kn:Shen07}.
}
\qed\end{rem}

Since the $L$-spectrum of a graph determines that of its complement \cite{kn:Kelmans65}, Theorem \ref{mainthm} implies the following result readily.

\begin{cor}
The complement of any double starlike tree $H(p,n,q)$  is determined by its Laplacian spectrum.
\end{cor}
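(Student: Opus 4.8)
The plan is to deduce this directly from Theorem \ref{thm01} using the standard link between the Laplacian spectrum of a graph and that of its complement. Recall the identity $L(\Gamma)+L(\overline{\Gamma})=N I-J$ for any graph $\Gamma$ on $N$ vertices, where $J$ is the all-ones matrix; since both $\Gamma$ and $\overline{\Gamma}$ have the all-ones vector as a null vector of their Laplacians, and since on the orthogonal complement of this vector the operator $L(\Gamma)+L(\overline{\Gamma})$ acts as $N\cdot I$, the Laplacian eigenvalues of $\overline{\Gamma}$ are precisely $0$ together with the numbers $N-\mu_i(\Gamma)$ for $i=1,2,\ldots,N-1$.

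First I would record the consequence that matters: because the number of vertices $N$ is itself determined by the Laplacian spectrum (item (1) of Lemma \ref{spectrum}), the passage $\Gamma\mapsto\overline{\Gamma}$ induces a well-defined transformation on Laplacian spectra, and hence two graphs are $L$-cospectral if and only if their complements are. Then, given any graph $G'$ that is $L$-cospectral with $\overline{H(p,n,q)}$, item (1) of Lemma \ref{spectrum} shows that $G'$ has $n+p+q$ vertices, and the observation just made shows that $\overline{G'}$ is $L$-cospectral with $\overline{\overline{H(p,n,q)}}=H(p,n,q)$. By Theorem \ref{thm01} we obtain $\overline{G'}\cong H(p,n,q)$, and therefore $G'\cong\overline{H(p,n,q)}$, which is exactly the assertion.

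There is essentially no obstacle beyond the invocation of Theorem \ref{thm01}: every remaining step is a formal manipulation. The only point that deserves a line of justification is that taking complements preserves the relevant information, i.e.\ that $L$-cospectrality is inherited by complements; this in turn relies only on the number of vertices being spectrum-determined, which is already supplied by Lemma \ref{spectrum}.
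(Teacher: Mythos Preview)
Your argument is correct and is essentially the same as the paper's: the paper's proof simply cites \cite{kn:Kelmans74} (the Kelmans--Chelnokov result linking the Laplacian spectrum of a graph to that of its complement) together with Theorem~\ref{thm01}. You have merely unpacked that citation by writing out the identity $L(\Gamma)+L(\overline{\Gamma})=NI-J$ and its spectral consequence explicitly.
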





\end{document}